\Crefname{ALC@unique}{Line}{Lines}
\numberwithin{theorem}{section}
\newcommand{\TheTitle}{Listing Words in Free Groups} 
\newcommand{\TheAuthors}{C. Ramsay}
\headers{\TheTitle}{\TheAuthors}
\title{{\TheTitle}
}
\author{
  Colin Ramsay\thanks{School of Information Technology and Electrical
  Engineering, The University of Queensland, Australia
 (\email{uqcramsa@uq.edu.au}).}
}
\newcommand{\qb}[1]{\textbf{#1}}
\newcommand{\qs}[1]{\textsc{#1}}
\newcommand{\qt}[1]{\texttt{#1}}
\newcommand{\qq}{\quad}
\newcommand{\za}{\textbf{begin}\xspace}
\newcommand{\zz}{\textbf{end}\xspace}
\newcommand{\zi}{\textbf{if}\xspace}
\newcommand{\zt}{\textbf{then}\xspace}
\newcommand{\ze}{\textbf{else}\xspace}
\newcommand{\zr}{\textbf{return}\xspace}
\begin{document}

\maketitle

\begin{abstract}
Lists of equivalence classes of words under rotation or rotation plus reversal 
  (i.e., necklaces and bracelets) have many uses, and efficient algorithms for
  generating these lists exist.
In combinatorial group theory elements of a group are typically written as words
  in the generators and their inverses, and necklaces and bracelets correspond
  to conjugacy classes and relators respectively.
We present algorithms to generate lists of freely and cyclically reduced
  necklaces and bracelets in free groups.
Experimental evidence suggests that these algorithms are CAT -- that is, they
  run in constant amortized time.
\end{abstract}

\begin{keywords}
  necklace, bracelet, CAT algorithm, free group, reduced word, conjugacy class
\end{keywords}

\begin{AMS}
  05A05, 20E05, 20E45, 20F05
\end{AMS}

\section{Introduction}

Given an ordered alphabet of size $k$, a \emph{necklace} of length $n$ is the 
  lexicographically least element of an equivalence class of $k$-ary strings
  of length $n$ under rotation.
A word is called a \emph{prenecklace} if it is the prefix of some necklace.
An aperiodic necklace is called a \emph{Lyndon word}.
A \emph{bracelet} of length $n$ is the lexicographically least element of an 
  equivalence class of $k$-ary strings of length $n$ under string rotation and
  string reversal.

For a fixed $k$ the number of necklaces (also Lydon words, prenecklaces and 
  bracelets) grows exponentially with the length.
See, for example, \cite{CRSSM00, Saw01} for exact counts and bounds.
So generating a complete list of the length $n$ necklaces takes exponential 
  time, and our goal is an algorithm where the computation (the total amount of
  change to the data structures, not including any processing of the generated
  necklaces) is proportional to the number of necklaces generated.
Such an algorithm is a constant amortized time, or CAT\kern-1pt, algorithm.

In group theory, elements of a group can be represented by strings (or 
  \emph{words}) in the group's generators and their inverses.
Symbolic algebra systems such as \qt{GAP} and \qs{Magma} \cite{GAP4,MR1484478} 
  make sophisticated testing of large numbers of examples straightforward, and
  efficient algorithms for generating complete lists of words, up to some
  equivalence, are an important part of this.
The extant enumeration algorithms do not take into account the group structure,
  and we demonstrate how they can be recast to address this.
For necklaces this process is trivial, while for bracelets we need modify the
  reversal checking code materially.

The remainder of this paper is organized as follows.
\Cref{sec:back} gives some background material on necklaces and bracelets and on
  free groups and group presentations, and discusses the analogues of
  necklaces and bracelets in groups.
\Cref{sec:neck,sec:brace}, respectively, describe our necklace and bracelet 
  listing algorithms, and we discuss our results in \Cref{sec:discuss}.
\Cref{sup:comp} describes the tests we
  performed on the running times of our algorithms.

\section{Background}\label{sec:back}

The first algorithm for generating necklaces, the FKM algorithm (due to 
  Fredericksen, Kessler and Maiorana \cite{FK86, FM78}), was proved to be CAT in
  \cite{RSW91}.
A simple recursive CAT algorithm to generate prenecklaces, necklaces and Lyndon 
  words was given in \cite{CRSSM00}, and it is this algorithm which forms the
  basis of our work.

Duval's algorithm for factoring a string, of length $n$, into Lyndon words 
  \cite{Duv83} yields an algorithm for generating the necklace of the string in 
  $O(n)$ time.
Thus a straightforward approach to generating bracelets is to generate the 
  necklaces and to reject those where the necklace of the reversal is less than
  the necklace.
However, this does not yield a CAT algorithm.
The algorithm given in \cite{Saw01} is based on the recursive algorithm of 
  \cite{CRSSM00} and maintains auxiliary data regarding the current prenecklace,
  using this to guide testing against its reversal and control the computation.
The total amount of extra work, amortized over all bracelets, is constant, so
  this bracelet generating algorithm is CAT\kern-1pt.

Given a set $S$ of $g$ symbols define the set $S^\prime = 
  S \cup \{s^{-1} : s \in S\}$.
The set of all words on $S^\prime$ is $F_g$, the \emph{free group} of 
  \emph{rank} $g$.
The group operation is concatenation, $s^{-1}$ is read as the inverse of $s$,
  and the empty word is the identity element.
Words with no substrings of the form $ss^{-1}$ or $s^{-1}s$ are called 
  \emph{freely reduced}.
Two words represent the same element of $F_g$ if and only if they are
  identical after being freely reduced.
(See \cite{LS1977,MKS66} for more details on combinatorial group theory.)

Let $w = s_1 \cdots s_\ell \in F_g$.
If $s_1$ and $s_\ell$ are not inverses of each other, then $w$ is 
  \emph{cyclically reduced}.
Given an $r \in S^\prime$ then the word $r^{-1} w r$ is the \emph{conjugate} of
  $w$ by $r$\kern-1pt, denoted $w^r$\kern-2pt.
If $r=s_1$ (resp., $s_\ell^{-1}$) and the substring $r^{-1}s_1$ (resp.,
  $s_\ell r$) is canceled from $w^r$ then the resulting word is a rotation of
  $w$ by one position.
If $r = s_1 = s_\ell^{-1}$ then canceling $r^{-1}s_1$ and $s_\ell r$ performs a
  cyclic reduction step and reduces the length of $w$ by two.
Repeated conjugation of a word may render it cyclically reduced or freely 
  reduced, rotate it arbitrarily, or increase its length arbitrarily.

Conjugation partitions the words in $F_g$ into \emph{conjugacy classes}.
Given an order on $S^\prime$\kern-2pt, we take as class representatives the 
  lexicographically least element of the freely and cyclically reduced words in
  the class.
So, in the context of $F_g$, listing the freely and cyclically reduced necklaces
  of length $\ell$ is equivalent to listing the conjugacy classes whose shortest
  words have length $\ell$.
A word which is both freely reduced and cyclically reduced will be called
  simply \emph{reduced}.
 
Reversing a word $w$ is not meaningful in $F_g$.
However reversing $w$ and then replacing each of its elements by its inverse
  generates $w^{-1}$ (i.e., freely reducing $ww^{-1}$ or $w^{-1}w$ results in 
  the empty word).
Given a set of words $R$ in $F_g$, the \emph{normal closure} $N$ of $R$ in 
  $F_g$ is the set of all words which are concatenations of conjugates of the
  words in $R$ and their inverses.
Groups are often described as quotient groups of free groups and $N$ is a
  normal subgroup of $F_g$, so the quotient $F_g / N$ describes some group $G$.
(Formally, there is a homomorphism from $F_g$ onto $G$ with kernal $N$\kern-1pt,
  see \cite{LS1977,MKS66}.)

The pair $(S,R)$ is a \emph{presentation} for $G$, written as
   $G = \langle S : R \rangle$.
The elements of $S$ are the \emph{generators} of $G$.
The words in $R$ are equal to the identity in $G$ and are called 
  \emph{relators}.
So, in the context of $F_g$, listing the reduced bracelets of length $\ell$ is
  equivalent to listing equivalence classes of possible relators of length
  $\ell$ in a presentation.

Enumerating reduced necklaces and bracelets in groups is equivalent to
  enumerating general necklaces and bracelets with forbidden substrings.
An efficient algorithm exists to enumerate necklaces with a forbidden substring 
  \cite{RS00}, however the analysis therein assumes that the substring has
  length at least three.
For substrings of length one or two, \cite{RS00} notes that ``trivial 
  algorithms can be developed''\kern-2pt.
In our case, we simply test each potential addition to the current prenecklace,
  and skip those which cannot yield a reduced necklace.

In the remainder of this paper, unless explicitly stated otherwise, we are 
  always working in the free group $F_g$.
The number of group generators will be denoted by $g$ and the word length by
  $\ell$ (both assumed positive), with the set of possible symbols in our words
  having size $k = 2g$.
From \cite[Theorems 1.1 \& 14.2]{Riv99} we have the following result.

\begin{theorem}\label{thm:X}
The number of reduced words of length $\ell$ in $F_g$ is equal to
  $$ \mathcal{C}(g,\ell) = \begin{cases}
   (2g-1)^\ell + 1,    & \text{if $\ell$ is odd;} \\
   (2g-1)^\ell + 2g-1, & \text{if $\ell$ is even.}
  \end{cases} $$
Let $\phi$ denote the Euler totient function.
Then the number of reduced necklaces of length $\ell$ in $F_g$ is equal to
  $$ \mathcal{CC}(g,\ell) = \frac{1}{\ell} \sum_{d \mid \ell}
                                               \phi(d) \mathcal{C}(g,\ell/d). $$
\end{theorem}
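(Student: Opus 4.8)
The plan is to establish the two formulas separately: the count of reduced words by a closed-walk (transfer-matrix) argument, and the count of reduced necklaces, assuming the first formula, by the Cauchy--Frobenius (Burnside) lemma.

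For $\mathcal{C}(g,\ell)$, order the $k = 2g$ symbols and let $M$ be the $k \times k$ matrix with $M_{xy} = 1$ if $y \neq x^{-1}$ and $M_{xy}= 0$ otherwise. A word $s_1 \cdots s_\ell$ is reduced (i.e.\ freely and cyclically reduced) precisely when $s_{i+1} \neq s_i^{-1}$ for $1 \le i < \ell$ and $s_1 \neq s_\ell^{-1}$, that is, precisely when $s_1, s_2, \dots, s_\ell, s_1$ is a closed walk in the digraph with adjacency matrix $M$; hence $\mathcal{C}(g,\ell) = \operatorname{tr}(M^\ell)$ and it remains to find the spectrum of $M$. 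Writing $J$ for the all-ones matrix and $P$ for the permutation matrix of the fixed-point-free involution $x \mapsto x^{-1}$, we have $M = J - P$. Since $P$ is a product of $g$ disjoint transpositions, its $(+1)$-eigenspace is spanned by the $g$ vectors $e_x + e_{x^{-1}}$ (one per inverse pair) and its $(-1)$-eigenspace by the $g$ vectors $e_x - e_{x^{-1}}$. The latter lie in $\ker J$, so they are $(+1)$-eigenvectors of $M$; in the basis $\{e_x + e_{x^{-1}}\}$ of the former, $P$ is the identity and $J$ is represented by $2\,\mathbf{1}\mathbf{1}^{\!\top}$ (each $e_x + e_{x^{-1}}$ has coordinate sum $2$ and these vectors sum to the all-ones vector), so $M$ restricts to $2\,\mathbf{1}\mathbf{1}^{\!\top} - I$, with eigenvalues $2g-1$ (once) and $-1$ ($g-1$ times). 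Thus $M$ has spectrum $\{\,2g-1,\ \underbrace{1,\dots,1}_{g},\ \underbrace{-1,\dots,-1}_{g-1}\,\}$ and
$$ \mathcal{C}(g,\ell) \;=\; \operatorname{tr}(M^\ell) \;=\; (2g-1)^\ell + g + (g-1)(-1)^\ell, $$
which equals $(2g-1)^\ell + 1$ when $\ell$ is odd and $(2g-1)^\ell + 2g - 1$ when $\ell$ is even. (One can instead avoid linear algebra by factoring a freely reduced word as a conjugating prefix times a cyclically reduced core, but the bookkeeping is fussier.)

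For $\mathcal{CC}(g,\ell)$, apply Burnside's lemma to the rotation action of $C_\ell = \mathbb{Z}/\ell$ on the set $X$ of reduced words of length $\ell$: its orbits are exactly the reduced necklaces, so $\mathcal{CC}(g,\ell) = |X / C_\ell|$. First, $X$ is closed under rotation, because the non-cancellation conditions needed to make $s_2 \cdots s_\ell s_1$ reduced form a subset of those making $s_1 \cdots s_\ell$ reduced. Second, for $0 \le j < \ell$ the rotation $\rho^j$ splits the $\ell$ positions into $\gcd(j,\ell)$ orbits of equal size, so a length-$\ell$ word is fixed by $\rho^j$ iff it has period $d := \gcd(j,\ell)$, i.e.\ iff it is $u^{\ell/d}$ for a unique word $u$ of length $d$; and $u^{\ell/d}$ is reduced iff $u$ is reduced, since every junction to be checked is a copy of the single junction between the last and first letters of $u$ --- this is exactly where cyclic, not merely free, reducedness of $u$ matters. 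Hence $|X^{\rho^j}| = \mathcal{C}(g,\gcd(j,\ell))$. Collecting the $\ell$ rotations according to the value $d \mid \ell$ of $\gcd(j,\ell)$, of which there are $\phi(\ell/d)$, Burnside's lemma yields
$$ \mathcal{CC}(g,\ell) \;=\; \frac{1}{\ell}\sum_{d \mid \ell} \phi(\ell/d)\,\mathcal{C}(g,d) \;=\; \frac{1}{\ell}\sum_{d \mid \ell} \phi(d)\,\mathcal{C}(g,\ell/d), $$
the last equality by the substitution $d \mapsto \ell/d$.

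I expect the first formula to be the main obstacle, and within it the delicate point is pinning down the eigenvalue multiplicities of $M = J - P$ carefully enough that the $(-1)^\ell$ term --- hence the odd/even split --- comes out correctly; once the spectrum of $M$ is known, the closed-walk count and the subsequent Burnside computation are routine. The only other point requiring care is the equivalence ``$u^{\ell/d}$ reduced $\iff$ $u$ reduced'' used in the fixed-point count.
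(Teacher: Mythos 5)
Your proof is correct. Note that the paper does not prove this theorem at all---it simply cites Rivin [Theorems 1.1 \& 14.2] (and Coornaert for the companion result), and Rivin's argument is essentially the one you give: a transfer-matrix/trace computation for the cyclically reduced word count (the matrix $J-P$ with spectrum $2g-1$, $+1$ with multiplicity $g$, and $-1$ with multiplicity $g-1$, which you pin down correctly), followed by Burnside's lemma over the rotation action for the necklace count. Your verification that the fixed points of $\rho^j$ are exactly the powers $u^{\ell/d}$ with $u$ reduced is the one point where care is needed, and you handle it properly.
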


In the general case (i.e, not in $F_g$) it is possible for a necklace and its
  reversal to be equal, up to rotation -- consider the necklace $ababb$ and its
  reversal $bbaba$.
However, in $F_g$ a reduced necklace cannot be equal to its inverse or any of
  its inverse's rotations.
More generally, we have the following result.

\begin{lemma}\label{lem:Z}
Let $w$ be a freely reduced word of length $\ell > 0$ in $F_g$.
Then no conjugate of $w^{-1}$ equals $w$.
\end{lemma}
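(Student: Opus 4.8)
The plan is to assume that a conjugate of $w^{-1}$ equals $w$, reduce to the case where the word is cyclically reduced, and then close with a short parity argument about letter positions.

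Suppose $r^{-1} w^{-1} r = w$ in $F_g$ for some $r$, so that $w$ and $w^{-1}$ are conjugate. First I would write $w = u c u^{-1}$ as a freely reduced word with $c$ cyclically reduced; such a decomposition exists and is unique, and since $w$ is freely reduced and of positive length the factor $c$ is nonempty (otherwise $w = u u^{-1}$ would fail to be freely reduced), say $c = c_1 \cdots c_m$ with $m \ge 1$ and each $c_i \in S^\prime$. Because $w^{-1} = u c^{-1} u^{-1}$ with $c^{-1}$ again cyclically reduced, the conjugacy criterion in free groups -- conjugate elements have cyclically reduced forms that are cyclic rotations of one another, see \cite{LS1977,MKS66} -- shows that $c^{-1}$ is a cyclic rotation of $c$. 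From here on everything takes place inside the cyclic word $c$.

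Next I would make the rotation condition explicit. Viewing $c$ as a function $p \mapsto c_p$ on $\mathbb{Z}/m$ and using $c^{-1} = c_m^{-1} c_{m-1}^{-1} \cdots c_1^{-1}$, if $c^{-1}$ is the shift of $c$ by $j$ positions then comparing $i$th letters gives $c_{m+1-i}^{-1} = c_{j+i}$, which after the substitution $p = j+i$ becomes $c_{\alpha - p} = c_p^{-1}$ for all $p \in \mathbb{Z}/m$, where $\alpha \equiv j+1 \pmod m$. Now the doubling map $p \mapsto 2p$ on $\mathbb{Z}/m$ is onto when $m$ is odd and has image exactly the set of even residues when $m$ is even; as $\alpha$ and $\alpha - 1$ are consecutive integers, at least one of them lies in this image. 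If $2p_0 \equiv \alpha \pmod m$ then $\alpha - p_0 \equiv p_0$, so the relation $c_{\alpha-p} = c_p^{-1}$ forces $c_{p_0} = c_{p_0}^{-1}$, impossible since no symbol of $S^\prime$ equals its own inverse. If instead $2p_0 \equiv \alpha - 1 \pmod m$ then $\alpha - p_0 \equiv p_0 + 1$, so $c_{p_0+1} = c_{p_0}^{-1}$: a cyclically adjacent pair of letters of $c$ that cancels, contradicting that $c$ is cyclically reduced. Either way we reach a contradiction, which proves the lemma.

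I expect the only real friction to be in the reduction: one must invoke the uniqueness of the decomposition $w = u c u^{-1}$, quote the free-group conjugacy criterion, and turn ``$c^{-1}$ is a rotation of $c$'' into the identity $c_{\alpha - p} = c_p^{-1}$ without index slips. After that the finish is immediate -- in essence, the reflection $p \mapsto \alpha - p$ of $\mathbb{Z}/m$ either fixes a position or sends some position to its cyclic successor, and both possibilities are forbidden. (One could bypass the conjugacy criterion by absorbing $u$ into $r$ to get a conjugacy $s^{-1} c^{-1} s = c$ with $c$ cyclically reduced and analyzing its cancellation directly, but that is strictly more work than citing the standard fact.)
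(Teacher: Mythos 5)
Your proof is correct, but it takes a genuinely different route from the paper's. The paper works directly on the letters of $w$: it first shows that a nonempty freely reduced word cannot equal its own inverse, since $w=w^{-1}$ would give $ww=1$, while writing $w=u^{-1}vu$ with $v$ nonempty and chosen so that $vv$ does not cancel makes $ww=u^{-1}vvu$ a nonempty reduced word; it then rules out proper rotations of $w^{-1}$ by observing that equality would force a prefix of $w$ to be its own inverse, reducing to the first step; and it handles arbitrary conjugates by a length count. You instead pass immediately to the cyclically reduced core $c$ via the standard conjugacy criterion for free groups, and kill the resulting anti-palindrome condition $c_{\alpha-p}=c_p^{-1}$ on $\mathbb{Z}/m$ with a parity argument: the reflection $p\mapsto\alpha-p$ either fixes a position (forcing a self-inverse letter) or sends a position to its cyclic successor (forcing a cancelling adjacent pair), and both are impossible. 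Your computations check out, including the index bookkeeping and the surjectivity claim for the doubling map. What your route buys is a uniform and airtight treatment of all conjugates: the paper's step (iii) tacitly assumes that a length-preserving conjugate of $w^{-1}$ is $w^{-1}$ or a rotation of it, which is not literally true when $w$ is not cyclically reduced (for $w=aba^{-1}$, the word $a^{-1}b^{-1}a$ is a length-$3$ conjugate of $w^{-1}$ that is no rotation of it), whereas your reduction to the core sidesteps this entirely. What it costs is quoting the conjugacy criterion as a black box; the paper's argument is self-contained, resting only on the elementary fact that a nontrivial element of a free group has nontrivial square.
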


\begin{proof}
Let $w = s_1 \cdots s_\ell$, and write $1$ for the empty word and $\bar{x}$
  for $x^{-1}$\kern-2pt.

(i)
We first prove that $w \neq \bar{w}$.
If $w=\bar{w}$ then $w$ is its own inverse and so $ww = 1$.
Now put $w = \bar{u}vu$, where $u,v \in F_g$ are freely reduced and $u$ has 
  maximal length.
Since $w$ is freely reduced, $v$ is non-empty and, by $u$'s maximality, there is
  no free reduction in $vv$.
Thus $ww = \bar{u}vu\bar{u}vu = \bar{u}vvu$ is a freely reduced non-empty word,
  contradicting $ww=1$.

(ii)
Now assume that $w_r = 
  \bar{s}_k\cdots \bar{s}_1\bar{s}_\ell\cdots\bar{s}_{k+1}$, 
  $1 \leqslant k \leqslant \ell-1$, is a proper rotation of $\bar{w}$ which
  equals $w$.
If $\bar{s}_1\bar{s}_\ell = 1$ then free reduction of $w_r$ yields a word
  of length $n<\ell$, so $w \neq w_r$.
Thus $w_r$ must be freely reduced, and $w=w_r$ implies that 
  $w_1 = \bar{s}_k\cdots \bar{s}_1 = s_1 \cdots s_k = \bar{w}_1$.
However this is impossible (part (i), with $w=w_1$), so $\bar{w}$ is not a
  proper rotation of $w$.

(iii)
Now consider arbitrary conjugation of $\bar{w}$, followed by free reduction.
If this yields a word of length $n\neq\ell$, then $w\neq\bar{w}$.
If not, then part (i) or (ii) applies.
\end{proof}

Thus the set of reduced necklaces in $F_g$ of length $\ell$ can be partitioned
  into pairs, where the words in a pair are inverses (up to rotation) and are
  not equal under rotation.
So precisely one member of each pair is a bracelet, and the number of reduced
  bracelets of length $\ell$ in $F_g$ is $ \mathcal{CC}(g,\ell) / 2 $.

In some applications we may only be interested in aperiodic (or \emph{prime})
  words, and it is trivial to modify our algorithms to generate
  these (see \Cref{sec:discuss}).
From \cite[Equation (2.2)]{Coo05} we have the following result.

\begin{theorem}\label{thm:Y}
Let $\mu$ denote the M\"obius function.
Then the number of reduced prime words of length $\ell$ in $F_g$ is equal to
  $$ \tau(g,\ell) = \sum_{d \mid \ell} 
                             \mu\left(\frac{\ell}{d}\right) \mathcal{C}(g,d). $$
\end{theorem}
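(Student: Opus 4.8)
The plan is to classify the reduced words of length $\ell$ by the length of their primitive root and then invert. Recall the classical fact (from the Fine--Wilf / Lyndon--Schützenberger circle of results) that every nonempty string $w$ over the alphabet $S^\prime$ has a unique \emph{primitive root}: a string $u$ that is not a proper power, together with an integer $m \geqslant 1$, such that $w = u^m$. Indeed, if $w = u^m = v^n$ with $u,v$ primitive, then $u$ and $v$ commute, hence are powers of a common string, which by primitivity forces $u=v$ and $m=n$. In this language, $w$ is prime exactly when $m=1$.

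The next step, and the one point that needs care, is to check that a string $w$ is reduced if and only if its primitive root $u$ is reduced. Write $w = s_1 \cdots s_\ell$, put $d = |u|$ (so $d \mid \ell$), and write $u = s_1 \cdots s_d$. If $w$ is freely reduced then so is its substring $u$; and if $w$ is cyclically reduced then $s_1 \neq \bar{s}_\ell$, and since $u$ begins with $s_1$ and ends with $s_d$ while $w$ ends with $s_\ell = s_d$, we get $s_1 \neq \bar{s}_d$, so $u$ is cyclically reduced. Conversely, suppose $u$ is reduced. Within each copy of $u$ there is no free reduction; at the seam between two consecutive copies the only adjacent pair is $s_d s_1$, which does not cancel precisely because $u$ is cyclically reduced. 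Hence $w = u^m$ is freely reduced, of length exactly $m d$, and it begins with $s_1$ and ends with $s_d$ with $s_1 \neq \bar{s}_d$, so $w$ is cyclically reduced as well. The subtlety is that free reducedness alone is \emph{not} inherited by powers; it is the cyclic-reducedness hypothesis that makes the seams safe.

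Consequently, sending each reduced word of length $\ell$ to its primitive root is a bijection onto the disjoint union, over divisors $d$ of $\ell$, of the sets of reduced prime words of length $d$: the root of a reduced word is reduced and prime by the above and by definition, and conversely for any $d \mid \ell$ and any reduced prime word $u$ of length $d$ the word $u^{\ell/d}$ is reduced of length $\ell$ with primitive root $u$. Counting both sides gives
\[
  \mathcal{C}(g,\ell) \;=\; \sum_{d \mid \ell} \tau(g,d).
\]
Applying Möbius inversion to this identity yields
$\tau(g,\ell) = \sum_{d \mid \ell} \mu(\ell/d)\, \mathcal{C}(g,d)$,
with $\mathcal{C}$ given by \Cref{thm:X}, which is the asserted formula. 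The only real obstacle is the inherited-reducedness step; everything else is the standard divisor-sum / Möbius bookkeeping.
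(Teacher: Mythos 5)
Your argument is correct, but it is worth noting that the paper does not actually prove this statement: it simply imports the formula from \cite[Equation (2.2)]{Coo05}, so there is no in-paper proof to compare against. What you have written is a complete, self-contained derivation of the cited identity, and it is the standard one: classify reduced words of length $\ell$ by their primitive root, obtain $\mathcal{C}(g,\ell)=\sum_{d\mid\ell}\tau(g,d)$, and apply M\"obius inversion. You correctly isolate the one step that is not pure bookkeeping, namely that a word $w=u^m$ is reduced (in the paper's sense of freely \emph{and} cyclically reduced) if and only if its primitive root $u$ is, and your observation that cyclic reducedness is what makes the seams $s_d s_1$ cancellation-free is exactly right --- free reducedness alone does not pass to powers, as $u=aba^{-1}$ already shows. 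The uniqueness of the primitive root via Lyndon--Sch\"utzenberger is standard and fine to invoke. One small presentational point: the paper's notion of ``prime'' is ``not a proper power'' of a linear word, which is precisely what ``primitive root with exponent $m=1$'' captures, so your bijection lands on the right sets. In short, you have supplied a correct elementary proof of a result the paper only cites; Coornaert's derivation is essentially the same divisor-sum/M\"obius computation, starting from the count $\mathcal{C}(g,\ell)$ of \cref{thm:X}, so nothing is lost or gained beyond making the paper self-contained.
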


Thus, in $F_g$ there are $\tau(g,\ell)/\ell$ reduced necklaces and
  $\tau(g,\ell)/2 \ell$ reduced bracelets of length $\ell$ which are 
  not proper powers.

For $F_1$, with generator $z$, there are only two reduced necklaces ($z^\ell$
  and $(z^{-1})^\ell$) and one reduced bracelet ($z^\ell$) for all $\ell>0$, so
  we ignore this case and assume throughout that $g>1$.
Obviously, for $g=1$ and $\ell>1$ there are no reduced prime words.
The algorithms we give are valid for $g=1$ but they are not CAT, since it takes
  $O(\ell)$ time to set the word to $z\cdots z$ or to 
  $z^{-1} \!\cdots z^{-1}$\kern-1pt.

\section{Listing Necklaces}\label{sec:neck}

We first need to decide on the conventions we adopt for representing
  the group generators and their inverses, and how to order these $k$ symbols.
Using the integers $\pm 1$, $\dots$, $\pm g$ is straightforward and is
  convenient for generating and checking inverses.
However, it is awkward for running through the symbols in order and checking 
  symbol ordering, since we require a generator to precede its inverse and
  for $\pm 1$ to precede $+2$, etc.
Accordingly, we use the integers $0$, $\dots$, $k-1$ with the usual numeric
  ordering, where even integers $j$ denote the generators and odd integers
  $j+1$ their inverses.

\begin{algorithm}[t]
\caption{The \qt{areInv()} function}
\label{alg:a1}
\begin{algorithmic}[1]
\setcounter{ALC@unique}{0}     
\STATE{ \qb{function} \qt{areInv}( $x,y$ : integer ) \qb{returns} boolean; }
\STATE{ \za }
\STATE{ \qq \zi $x \bmod 2 = 0$ \zt }
\STATE{ \qq\qq \za \zi $x+1 = y$ \zt \zr \qs{true}; \zz; }
\STATE{ \qq \ze }
\STATE{ \qq\qq \za \zi $x = y+1$ \zt \zr \qs{true}; \zz; }
\STATE{ \qq \zr \qs{false}; }
\STATE{ \zz; }
\end{algorithmic}
\end{algorithm}

\begin{algorithm}[t]
\caption{The \qt{getInv()} function}
\label{alg:a2}
\begin{algorithmic}[1]
\setcounter{ALC@unique}{0}     
\STATE{ \qb{function} \qt{getInv}( $x$ : integer ) \qb{returns} integer; }
\STATE{ \za }
\STATE{ \qq \zi $x \bmod 2 = 0$ \zt \zr $x+1$; }
\STATE{ \qq \zr $x-1$;}
\STATE{ \zz; }
\end{algorithmic}
\end{algorithm}

To handle inverses we introduce the two utility functions \qt{areInv()} and
  \qt{getInv()} of \cref{alg:a1,alg:a2}.
These, respectively, check whether or not two symbols are inverses and return
  the inverse of a symbol.
These functions are common to both the necklace and bracelet algorithms and run
  in constant time.
We separate out these functions for simplicity -- in practice they can be
  compiled as inline functions or replaced by macros.

Our recursive necklace generation procedure \qt{genNeck()} in \cref{alg:a3} is
  now a simple modification of \cite[Algorithm 2.1]{CRSSM00}, with the \zi 
  statements at \cref{line3.9,line3.13} ensuring that the prenecklaces remain
  freely reduced and that the final necklaces are cyclically reduced.
For clarity we do not use the guard value $a_0$ of \cite{CRSSM00} and 
  instead use the wrapper code of \cref{alg:a4}.
This explicitly sets $a_1$ (recording its inverse in $aoi$ to facilitate
  cyclic reduction checking) and ensures that the prenecklaces at entry to
  \qt{genNeck()} are non-empty.

\begin{algorithm}[t]
\caption{The \qt{genNeck()} procedure}
\label{alg:a3}
\begin{algorithmic}[1]
\setcounter{ALC@unique}{0}     
\STATE{ \qb{procedure} \qt{genNeck}( $t,p$ : integer ); }
\STATE{ \qb{local} $j$ : integer; }
\STATE{ \za }
\STATE{ \qq \zi $t > \ell$ \zt \za }
\STATE{ \qq\qq \zi $\ell \bmod p = 0$ \zt process necklace $a_1 \cdots a_\ell$; }
\STATE{ \qq\qq \zr; }
\STATE{ \qq \zz;}
\STATE{ \qq $j := a_{t-p}$; }
\STATE\label{line3.9}{ \qq \zi \qb{not} \qt{areInv}( $a_{t-1},j$ ) \qb{and}
                                    ( $t < \ell$ \qb{or} $j \ne aoi$ ) \zt \za }
\STATE{ \qq\qq $a_t := j$; \ \qt{genNeck}( $t+1,p$ ); }
\STATE{ \qq \zz; }
\STATE{ \qq \qb{for} $j$ \qb{from} $a_{t-p}+1$ \qb{to} $k-1$ \qb{do} \za }
\STATE\label{line3.13}{ \qq\qq \zi \qb{not} \qt{areInv}( $a_{t-1},j$ ) \qb{and}
                                    ( $t < \ell$ \qb{or} $j \ne aoi$ ) \zt \za }
\STATE{ \qq\qq\qq $a_t := j$; \ \qt{genNeck}( $t+1,t$ ); }
\STATE{ \qq\qq \zz; }
\STATE{ \qq \zz; }
\STATE{ \qq \zr; }
\STATE{ \zz; }
\end{algorithmic}
\end{algorithm}

\begin{algorithm}[t]
\caption{The necklace code wrapper}
\label{alg:a4}
\begin{algorithmic}[1]
\setcounter{ALC@unique}{0}     
\STATE{ \qb{global} $\ell,g,k,aoi$ : integer; \ $a$ : integer array; }
\STATE{ \qb{local} $j$ : integer; }
\STATE{ $g :=$ number of generators; \ $\ell :=$ word length; }
\STATE{ $k := 2g$; }
\STATE{ \qb{for} $j$ \qb{from} $0$ \qb{to} $k-1$ \qb{do} \za }
\STATE{ \qq $a_1 := j$; }
\STATE{ \qq $aoi :=$ \qt{getInv}( $j$ ); }
\STATE{ \qq \qt{genNeck}( 2,1 ); }
\STATE{ \zz; }
\end{algorithmic}
\end{algorithm}

\section{Listing Bracelets}\label{sec:brace}

Our bracelet algorithm is inspired by that in \cite{Saw01}, with reversal of a 
  prenecklace being replaced by word inversion (i.e., reversal and
  element-to-inverse mapping).
The recursive bracelet generation procedure \qt{genBrace()} and its wrapper code
  are given in \cref{alg:a5,alg:a6}, while \cref{alg:a7} is the \qt{checkInv()}
  function for comparing a prenecklace with its inverse.
The \qt{genBrace()} procedure is an augmented version of \qt{genNeck()}, with
  the additional code checking each prenecklace (i.e., the putative
  ``prebracelets'') against its inverse and rejecting those that cannot yield
  bracelets.
Thus each rotation of the inverse of the final words is tested, and necklaces
  which are not bracelets are not generated.

The use of inverses as opposed to reversals actually results in a somewhat 
  simpler algorithm compared with that in \cite{Saw01}.
Firstly, note that our order is chosen so that a generator immediately precedes
  its inverse.
This implies that bracelets cannot start with a generator inverse and so these
  can be skipped in the wrapper code.
Secondly, although the \qt{checkInv()} function can return ``equal'' as well as 
  ``less than'' or ``greater than'' (as does the \qt{CheckRev()} function in 
  \cite{Saw01}), by \cref{lem:Z} it never does so since our preneckaces are
  always freely reduced.
This simplifies the code in the \qt{genBrace()} procedure, which needs only four
  parameters compared with the six of \qt{GenBracelets()} in \cite{Saw01}.

The $t$ and $p$ arguments of \qt{genBrace()} are, respectively, the index of the
  next
  position in the array $a$ and the length of the longest prefix of $a$ that is
  a Lyndon word.
The $u$ and $v$ arguments are, respectively, the number of copies of $a_1$ at
  the start of $a$ and the number of copies of $a_1^{-1} = aoi$ at the end of
  $a$, with the initial value of $v$ saved in the
  local variable $vv$.
The code at \crefrange{line5.9}{line5.14} and \cref{line5.22,line5.25} adjusts
  $u$ and $v$ as necessary for the next call to \qt{genBrace()} (if any), using
  the current value of $j$ (i.e., the next potential value for $a_t$) and the
  current word length $t-1$.

\begin{algorithm}[t]
\caption{The \qt{genBrace()} procedure}
\label{alg:a5}
\begin{algorithmic}[1]
\setcounter{ALC@unique}{0}     
\STATE{ \qb{procedure} \qt{genBrace}( $t,p,u,v$ : integer ); }
\STATE{ \qb{local} $j,vv := v$ : integer; }
\STATE{ \za }
\STATE{ \qq \zi $t > \ell$ \zt \za }
\STATE{ \qq\qq \zi $\ell \bmod p = 0$ \zt process bracelet $a_1 \cdots a_\ell$; }
\STATE{ \qq\qq \zr; }
\STATE{ \qq \zz;}
\STATE{ \qq $j := a_{t-p}$; }
\STATE\label{line5.9}{ \qq \zi $j = a_1$ \zt }
\STATE{ \qq\qq \za $v := 0$; \ \zi $u = t-1$ \zt $u := u+1$; \zz; }
\STATE{ \qq \ze \zi $j = aoi$ \zt }
\STATE{ \qq\qq $v := v+1$; }
\STATE{ \qq \ze }
\STATE\label{line5.14}{ \qq \qq $v := 0$; }
\STATE{ \qq \zi \qb{not} \qt{areInv}( $a_{t-1},j$ ) \qb{and} 
                                    ( $t < \ell$ \qb{or} $j \ne aoi$ ) \zt \za }
\STATE{ \qq\qq $a_t := j$; }
\STATE\label{line5.17}{ \qq\qq \zi $u = v$ \zt }
\STATE{ \qq\qq\qq \za \zi \qt{checkInv}( $t,u+1$ ) $<0$ \zt 
                                             \qt{genBrace}( $t+1,p,u,v$ ); \zz;}
\STATE{ \qq\qq \ze \zi $u > v$ \zt }
\STATE\label{line5.20}{ \qq\qq\qq \qt{genBrace}( $t+1,p,u,v$ ); }
\STATE{ \qq \zz; }
\STATE\label{line5.22}{ \qq \zi $u = t$ \zt $u := u-1$; }
\STATE{ \qq \qb{for} $j$ \qb{from} $a_{t-p}+1$ \qb{to} $k-1$ \qb{do} \za }
\STATE{ \qq\qq \zi \qb{not} \qt{areInv}( $a_{t-1},j$ ) \qb{and} 
                                    ( $t < \ell$ \qb{or} $j \ne aoi$ ) \zt \za }
\STATE\label{line5.25}{ \qq\qq\qq \zi $j = aoi$ \zt $v := vv+1$; \ze $v := 0$; }
\STATE{ \qq\qq\qq $a_t := j$; }
\STATE\label{line5.27}{ \qq\qq\qq \zi $u = v$ \zt }
\STATE{ \qq\qq\qq\qq \za \zi \qt{checkInv}( $t,u+1$ ) $<0$ \zt 
                                             \qt{genBrace}( $t+1,t,u,v$ ); \zz;}
\STATE{ \qq\qq\qq \ze \zi $u > v$ \zt }
\STATE\label{line5.30}{ \qq\qq\qq\qq \qt{genBrace}( $t+1,t,u,v$ ); }
\STATE{ \qq\qq \zz; }
\STATE{ \qq \zz; }
\STATE{ \qq \zr; }
\STATE{ \zz; }
\end{algorithmic}
\end{algorithm}

The recursive calls to \qt{genBrace()} are inside the \qt{if}-statements of
  \crefrange{line5.17}{line5.20} and \crefrange{line5.27}{line5.30}.
If $u>v$ then the current prenecklace is less than its inverse, so we can 
  immediately call \qt{genBrace()}.
If $u<v$ then the inverse is less than the prenecklace, the prenecklace cannot
  yield a bracelet, so we do nothing.
If $u=v$ then we need to call the \qt{checkInv()} function to compare the 
  prenecklace with its inverse.
If the prenecklace is less we call \qt{genBrace()}, otherwise we do nothing.

When the \qt{checkInv()} function is called, the current prenecklace starts with
  $u$ copies of $a_1$ and ends with $v=u$ copies of $aoi$.
The remainder, $\gamma$, is non-empty, does not start with $a_1$ or end with
  $aoi$, and is freely reduced.
The arguments $t$ and $i$ are the current prenecklace length and the index of
  the start of $\gamma$.
The \qt{for}-loop compares $\gamma$ with its inverse, returning $-1$ if
  $\gamma$ (and thus the prenecklace) precedes its inverse and $+1$ if 
  $\gamma^{-1}$ precedes $\gamma$.
The upper limit on the \qt{for}-loop is simply a convenient placeholder -- the
  loop is guaranteed to return $-1$ or $+1$ for some 
  $i \leqslant \lfloor (t+1)/2 \rfloor$.

\begin{algorithm}[t]
\caption{The bracelet code wrapper}
\label{alg:a6}
\begin{algorithmic}[1]
\setcounter{ALC@unique}{0}     
\STATE{ \qb{global} $\ell,g,k,aoi$ : integer; \ $a$ : integer array; }
\STATE{ \qb{local} $j$ : integer; }
\STATE{ $g :=$ number of generators; \ $\ell :=$ word length; }
\STATE{ $k := 2g$; }
\STATE{ \qb{for} $j$ \qb{from} $0$ \qb{to} $k-2$ \qb{step} $2$ \qb{do} \za }
\STATE{ \qq $a_1 := j$; }
\STATE{ \qq $aoi :=$ \qt{getInv}( $j$ ); }
\STATE{ \qq \qt{genBrace}( 2,1,1,0 ); }
\STATE{ \zz; }
\end{algorithmic}
\end{algorithm}

\begin{algorithm}[t]
\caption{The \qt{checkInv()} function}
\label{alg:a7}
\begin{algorithmic}[1]
\setcounter{ALC@unique}{0}     
\STATE{ \qb{function} \qt{checkInv}( $t,i$ : integer ) \qb{returns} integer; }
\STATE{ \qb{local} $j$ : integer; }
\STATE{ \za }
\STATE{ \qq \qb{for} $j$ \qb{from} $i$ \qb{to} $t$ \qb{do} \za }
\STATE{ \qq\qq \zi $a_j < \qt{getInv}(\ a_{t-j+1}\ )$ \zt \zr $-1$; }
\STATE{ \qq\qq \zi $a_j > \qt{getInv}(\ a_{t-j+1}\ )$ \zt \zr $+1$; }
\STATE{ \qq \zz;}
\STATE{ \qq \zr 0; }
\STATE{ \zz; }
\end{algorithmic}
\end{algorithm}

\section{Concluding Remarks}\label{sec:discuss}

We have implemented our algorithms in the \qt{C} and \qs{Magma} languages and
  incorporated them into programs for generating and testing lists of
  conjugacy classes and presentations.
The generation of word lists has proved very fast, with the programs' running 
  times being dominated by the times to process the necklaces and bracelets in
  the lists.
Empirical evidence (see \Cref{sup:comp}) suggests that our algorithms are
  CAT\kern-1pt, but we have no proof of this.

The \qt{genNeck()} and \qt{genBrace()} procedures as given process all reduced
  necklaces and bracelets.
If the ``$\ell \bmod p = 0$'' tests are replaced by ``$\ell = p$'' then only the
  reduced aperiodic necklaces (Lyndon words) and bracelets are processed by
  these procedures.

The recursive nature of our necklace and bracelet algorithms induces a tree
  structure on their search spaces.
These trees can easily be split into subtrees, allowing an enumeration to be 
  parallelised \cite{CRSSM00} or distributed across a set of heterogeneous
  machines.

\appendix

\section{Complexity Tests}\label{sup:comp}%
Implementations of our necklace and bracelet listing algorithms for
  reduced words in groups have proved very effective.
However we have no proof that they run in constant amortized time.
Accordingly, in a similar manner to \cite[\S5 \& \S6.2.5]{Saw02}, we produced 
  experimental results for the amount of work done compared with the number of
  necklaces or bracelets generated. 
For these tests the ``process \dots\ $a_1 \cdots a_\ell$'' actions in 
  \qt{genNeck()} and \qt{genBrace()} were replaced by code to
  accumulate the total number of necklaces and of bracelets.
These counts were checked against the expected counts from \Cref{sec:back} and
  used to calculate the work per necklace and per bracelet data.

\begin{figure}[t]
  \centering
  \includegraphics{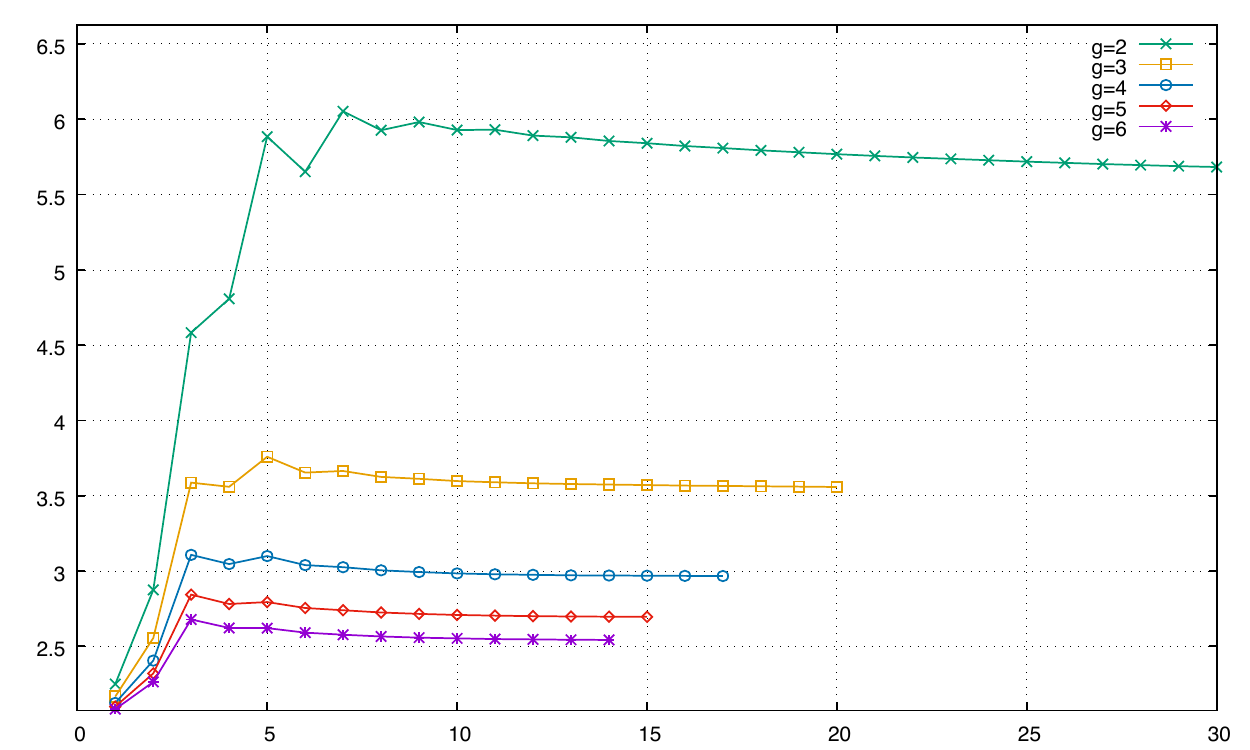}
  \caption{Work per Necklace versus Word Length.}
  \label{fig:nc}
\end{figure}

\begin{figure}[ht]
  \centering
  \includegraphics{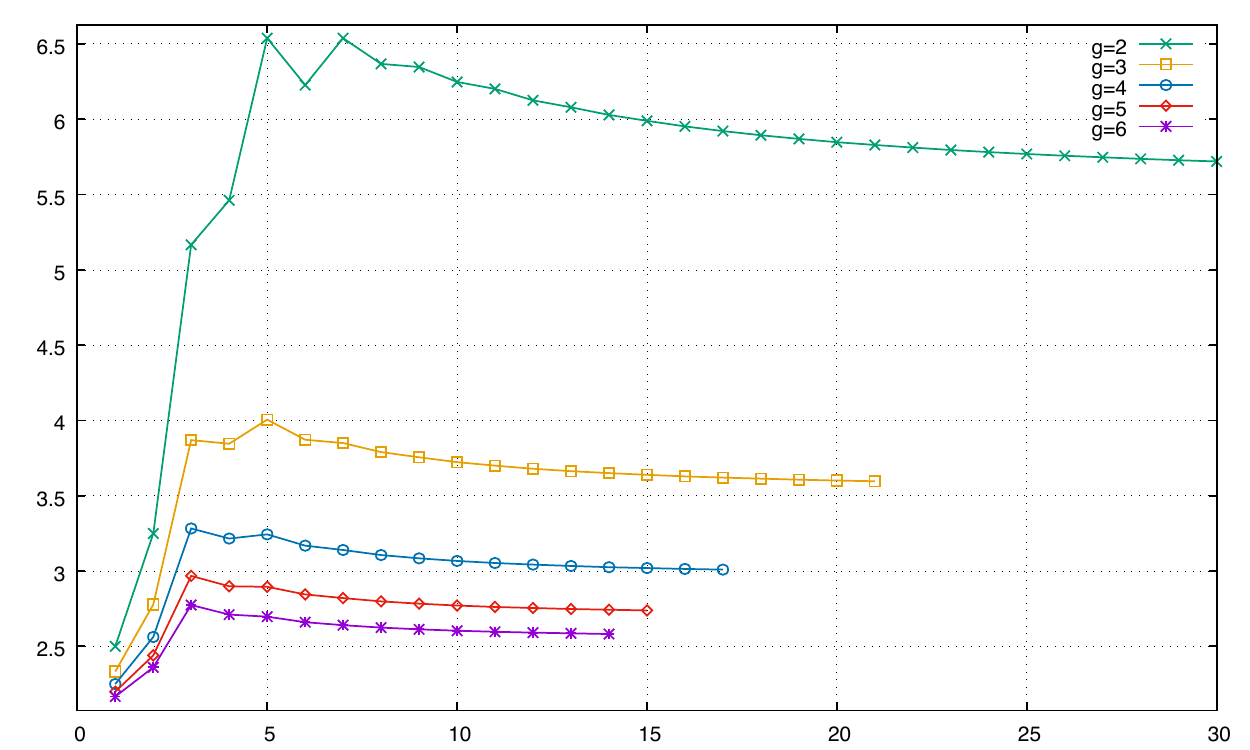}
  \caption{Work per Bracelet versus Word Length.}
  \label{fig:nb}
\end{figure}

The \qt{areInv()} and \qt{getInv()} functions are used by both
  algorithms and are constant time.
Each algorithm starts with a \qt{for}-loop, each iteration of which makes a
  call to \qt{genNeck()} or to \qt{genBrace()}.
For necklaces we count the total number of calls, both direct and recursive, to
  \qt{genNeck()}.
Apart from its embedded \qt{for}-loop, each call to \qt{genNeck()} is 
  constant time.
So our measure of the amount of work done is the total number of calls to 
  \qt{genNeck()} plus the total number of iterations of the embedded
  \qt{for}-loop across all calls to \qt{genNeck()}.
\Cref{fig:nc} plots, for various values of the number of group generators $g$, 
  the word length against the ratio of total work to number of necklaces.

For bracelets we count the total number of calls to \qt{genBrace()} and the
  total number of iterations of its embedded \qt{for}-loop.
We also need to account for the \qt{checkInv()} function, which is not
  constant time.
This function is only called if $u$ and $v$ (in \qt{genBrace()}) are equal, so 
  at least one iteration of \qt{checkInv()}'s embedded \qt{for}-loop is 
  guaranteed for each call.
Thus, we count the total number of iterations of this loop across all calls to
  \qt{checkInv()} and add this to our total.
\Cref{fig:nb} plots, for various values of the number of group generators $g$, 
  the word length against the ratio of total work to number of bracelets.

For both necklaces and bracelets, for all $2 \leqslant g \leqslant 6$, the 
  ratio of the total amount of work done to the number of reduced words 
  generated is decreasing (after an initial peak) as the word length increases.
This strongly suggests that the algorithms are CAT.

\end{document}